\documentclass[12pt]{amsart}
\usepackage{amsmath,amssymb}
\usepackage{amsfonts}
\usepackage{amsthm}
\usepackage{latexsym}
\usepackage{graphicx}
\usepackage{epstopdf}
\usepackage{caption}
\usepackage{subcaption}
\usepackage{cite}
\captionsetup{font={scriptsize}}

\usepackage{contour,color}


\def\p{\partial}

\def\R{\mathbb{R}}

\def\vv<#1>{\langle#1\rangle}

\def\s1{{\mathbb{S}^1}}

\def\XXint#1#2{\setbox0=\hbox{$#1{#2}{\int}$}{#2}\kern-.5\wd0 }

\def\XXint#1#2#3{{\setbox0=\hbox{$#1{#2#3}{\int}$}
     \vcenter{\hbox{$#2#3$}}\kern-.5\wd0}}



\def\vv<#1>{{\left\langle#1\right\rangle}}

\newtheorem{thm}{Theorem}[section]

\newtheorem{cor}{Corollary}[section]
\theoremstyle{definition}

\theoremstyle{remark}

\numberwithin{equation}{section}

\begin{document}
\title{Quasilocal energy and surface geometry of Kerr spacetime}
\author{Chengjie Yu}
\address{Department of Mathematics, Shantou University, Shantou, Guangdong, 515063, China}
\email{cjyu@stu.edu.cn}
\author{Jian-Liang Liu}
\address{Department of Mathematics, Shantou University, Shantou, Guangdong, 515063, China}
\email{liujl@stu.edu.cn}
\renewcommand{\subjclassname}{%
  \textup{2010} Mathematics Subject Classification}
\date{}
\keywords{quasi-local energy, Kerr spacetime}
\begin{abstract}
We study the quasi-local energy (QLE) and the surface geometry for Kerr spacetime in the Boyer-Lindquist coordinates without taking the slow rotation approximation. We also consider in the region $r\leq2m$, which is inside the ergosphere. For a certain region, $r>r_{k}(a)$, the Gaussian curvature of the surface with constant $t,r$ is positive, and for $r>\sqrt{3}a$ the critical value of the QLE is positive. We found that the three curves: the outer horizon $r=r_{+}(a)$, $r=r_{k}(a)$ and $r=\sqrt{3}a$ intersect at the point $a=\sqrt{3}m/2$, which is the limit for the horizon to be isometrically embedded into $\mathbb{R}^3$
. The numerical result indicates that the Kerr QLE is monotonically decreasing to the ADM $m$ from the region inside the ergosphere to large $r$. Based on the second law of black hole dynamics, the QLE is increasing with respect to the irreducible mass $M_{\mathrm{ir}}$. From a results of Chen-Wang-Yau
, we conclude that in a certain region, $r>r_{h}(a)$, the critical value of the Kerr QLE is a global minimum.
\end{abstract}
\maketitle\markboth{Yu \& Liu}{Quasi-local energy of the Kerr spacetime}

\section{Introduction}

In general relativity, due to the equivalence principle, gravitational energy has no proper local description. Although there is a well known total energy defined by Arnowitt, Deser and Misner, the ADM mass, we need some definitions for the practical applications which involve a finite region rather than the whole space. This leads to the concept of quasi-local energy. There are many different proposals for defining quasi-local quantities. Here we follow the covariant Hamiltonian approach of Chen, Nester and Tung (CNT) \cite{CNT2,CNT3,CNT4}, and use the method in \cite{SCLN2} to determine the reference. The physical significance of the choice of reference is the choice of the \emph{zero-point} value of energy [\cite{BLY} Sec. V, A. Subtraction term]. 
 For a survey of the covariant Hamiltonian approach one may refer to \cite{CNT5}. A major difficulty for the Hamiltonian approach is to find a suitable way to identify the reference. Basically there are two ways to determine the reference: The first one is ``analytic matching,'' in which the reference variables are directly restricted from the physical variables, for example, in Schwarzschild spacetime, one can calculate the metric and connection for the physical variables and then take the $m=0$ limit to be the reference ones \cite{LCN}. Although it is sometimes convenient to do this calculation, it is not so clear as to what is the meaning of the reference one gets; the second method is ``4D isometric matching,'' which is based on the 2-surface $S$ isometric embedding into a reference spacetime\footnote{We assume that the surface $S$ and the embedded surface $\bar{S}$ have the same orientation, and also that the spacetime $M$ and the reference spacetime $\bar{M}$ have the same time orientation and space orientation.} and then making an extension in the ``tubular'' neighborhood of $S$ in the normal plane. One can identify the local 4-frame of the physical and the reference spacetime \emph{just on $S$}. For an asymptotic flat spacetime,\footnote{The 2-surface isometric embedding into Minkowski spacetime is based on Wang and Yau \cite{WaYa}.} a resonable choice of reference is Minkowski spacetime, and for measuring the quasi-local \emph{energy}, the corresponding displacement vector field on $S$ is the timelike vector field $N$ identical to the timelike Killing vector field $\partial_{T}$ on $\bar{S}$ of the reference spacetime through the 4D isometric matching \cite {LY}. In \cite{LY} we found that for a specific decomposition of $N$, the result is the same as the Wang-Yau energy \cite{WaYa}. One may refer to the surveys on the Wang-Yau energy \cite{Miao,W,Sz}.

The application to Kerr spacetime using this method was considered in \cite{SCLN2}. The results are (i) for $r\geq2m$, the critical value of the QLE agrees with Martinez's result \cite{Ma} for the slow rotation approximation. (ii) The numerical results implies that the QLE is decreasing for increasing $a$, and monotonically decreasing to the value $m$ with respect to the spatial radius $r$. (iii) The value of the quasi-local angular momentum is the \emph{constant} $am$. 
Later, Tam and one of the authors found that this critical value of the QLE is actually the Brown-York mass [\cite{LT}, Theorem 2.1] (for $a\leq m$), and also the critical point is the unique solution in the region $r_{+}<r<8m/3$ for the slow rotation approximation (the global uniqueness is still not clear). Here $r_{+}=m+\sqrt{m^2-a^2}$ is the outer horizon of the Kerr spacetime.

In this paper, we analyze the QLE of Kerr spacetime for unrestricted rotation and also the region inside the ergosphere.\footnote{The ergosphere is the \emph{static limit}, given by $r=m+\sqrt{m^2-a^2\cos^2\theta}$ [\cite{MTW}, Box 33.2].} The situation is different from the case $r\geq 2m$ and slow rotation. For $r$ less than $2m$, there are limits on the isometric embedding; for nonslow rotation, there is no known theorem for positivity in some region. 

As mentioned before, the critical value of the QLE we considered depends on the 2-surface isometric embedding into $\mathbb{R}^3$. There are limits on $a$ for the surface's isometric embedding. It is known that in the Kerr spacetime, there is no event horizon for $a>m$, and hence the singularity is naked. For $a\leq m$ but not small, Smarr found that if $a>\sqrt{3}m/2$, the Gauss curvature is negative near the poles, which implies that the outer horizon cannot be embeded into $\mathbb{R}^3$ \emph{isometrically} \cite{Sm}. One can imagine that when $a$ grows up over that limit, the horizon ``warps'' out of $\mathbb{R}^3$ from the poles [\cite{Sm}, Fig.\ 4] until the extreme limit ($a=m$), and there is no horizon for $a>m$. Something like tearing a sticker  off a table: part of it is off but still some part stays on the table. 

We are interested in the limit not just for the horizon. The main results of this paper are: (i) the Gauss curvature is positive if and only if $r>r_{k}(a)$ [see \eqref{eqn-rk}], where $r_{k}(a)$ is the unique real root of $r^3+a^2 r-6a^2m$. This also gives the limit for the isometric embedding; (ii) the \emph{integrand} of the critical value of the QLE, i.e.\ $k_{0}-k$, is positive if and only if $r>\sqrt{3}a$. Based on these results, we conclude that in a certain region $r>r_{h}(a)$ [see \eqref{eqn-rh}], the critical value of the Kerr QLE is a global minimum with respect to the isometric embedding, which follows as a corollary from Chen-Wang-Yau's result \cite{CWY}; (iii) the numerical results imply that the QLE is increasing with respect to the irreducible mass of the Kerr black hole. 

Note that the three curves: the outer horizon $r_{+}(a)$, $r_{k}(a)$ and $r=\sqrt{3}a$ intersect at the point $a=\sqrt{3}m/2$. In the \emph{triangle-like} region (see Fig.\ 1), $r_{k}(a)\leq r\leq\sqrt{3}a$ for $\sqrt{3}m/2\leq a\leq m$, the integrand $k_{0}-k$ is not always positive. That means we cannot use the theorem in \cite{CWY} to prove the minimizing property in that region. Also it is not mathematically proved that the QLE is positive in that region. However, the numerical results imply positivity and a monotonically decreasing property. This extends the results of \cite{SCLN2} into the region $r\leq 2m$, and also generalizes the result of Martinez \cite{Ma} to nonslow rotation.

Regarding the result that the QLE is decreasing with respect to $a$, we found that if we replace $a$ by the irreducible mass $M_{\mathrm{ir}}$, the QLE is monotonically increasing with respect to $M_{\mathrm{ir}}$ (see Fig.\ 2). The irreducible mass is protortional to the square root of the black hole area, that implies that the larger the black hole, the larger the QLE.

\begin{figure}[!htb]
	\centering
	\begin{subfigure}{0.5\textwidth}
		\centering
		\includegraphics[width=0.92\linewidth, height=0.27\textheight]{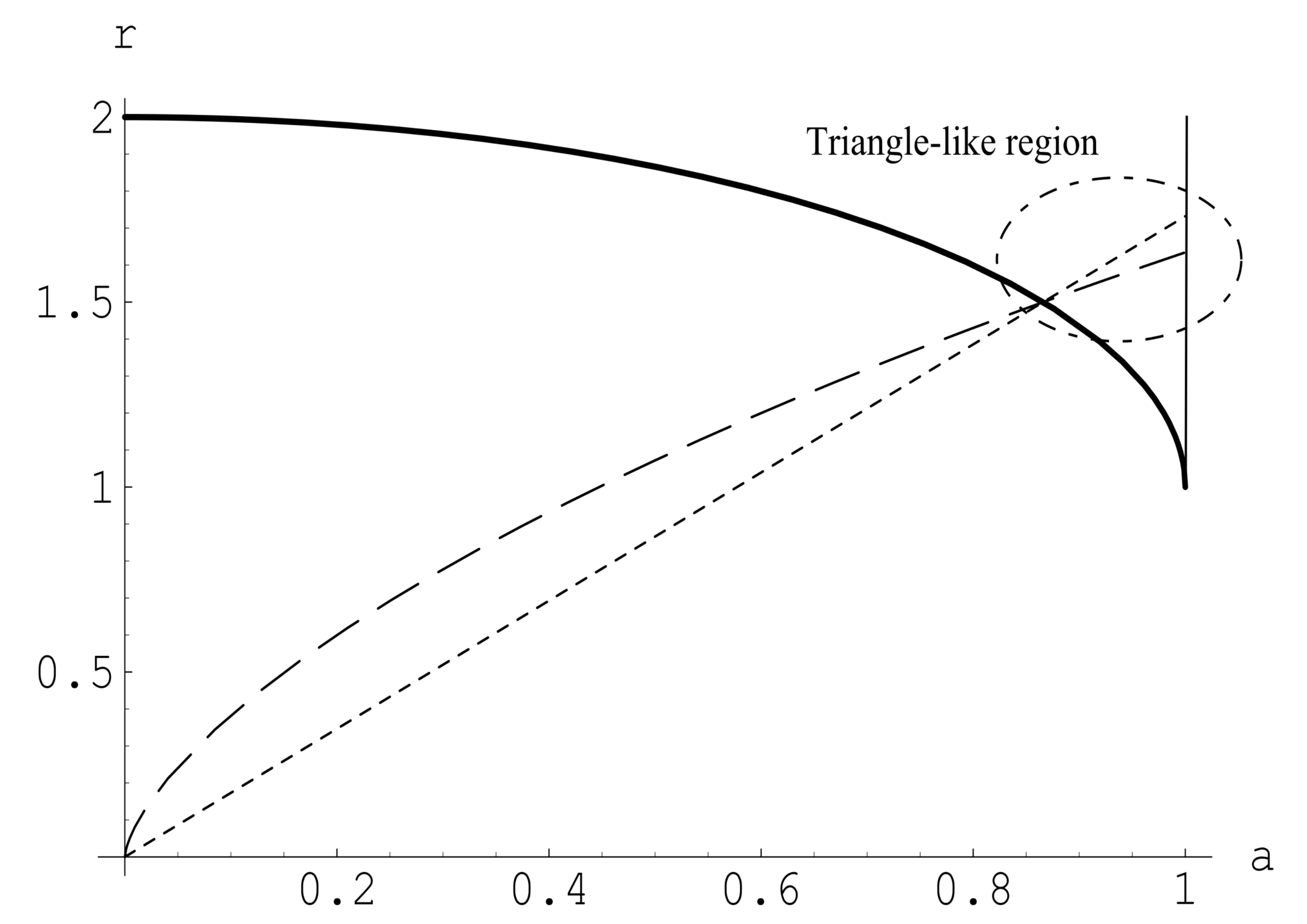}
		\caption{{\scriptsize$r_k,r=\sqrt{3}a$ and $r_{+}$}}\label{fig:4a}		 
	\end{subfigure}%
		\begin{subfigure}{0.5\textwidth}
		\centering
		\includegraphics[width=0.9\linewidth, height=0.27\textheight]{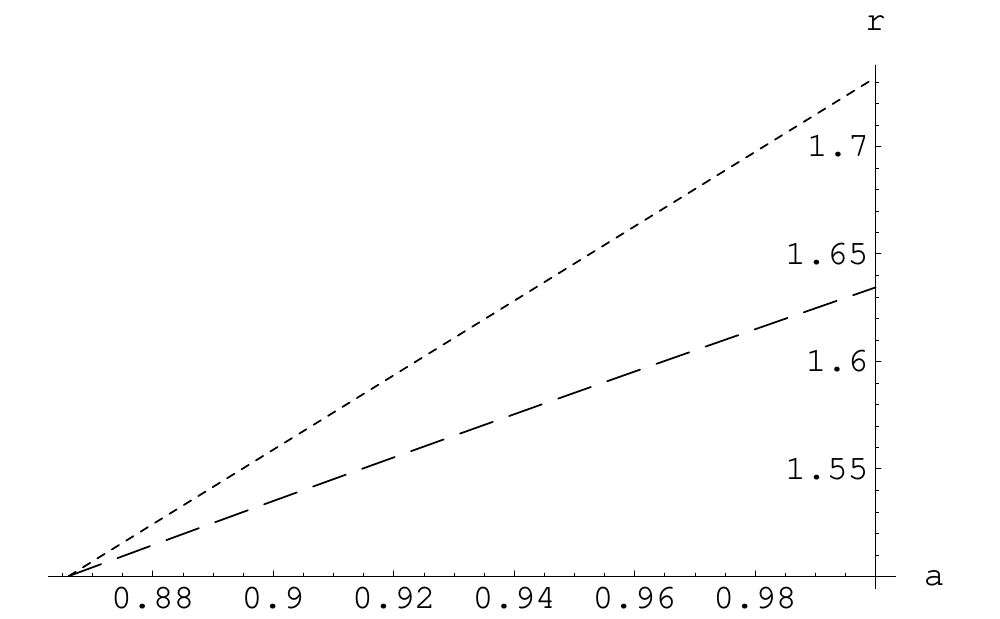}
		\caption{{\scriptsize Triangle-like region $\mathfrak{A}$}}\label{fig:4b}
	\end{subfigure}
	\caption{{\it r--a} figure at $\theta=0$: $r_k$: dashed, $r=\sqrt{3}a$: dotted, $r_{+}$: thickness. }\label{fig:1}
\end{figure}

\begin{figure}[!htb]
	\centering
	\begin{subfigure}{0.5\textwidth}
		\centering
		\includegraphics[width=0.95\linewidth, height=0.27\textheight]{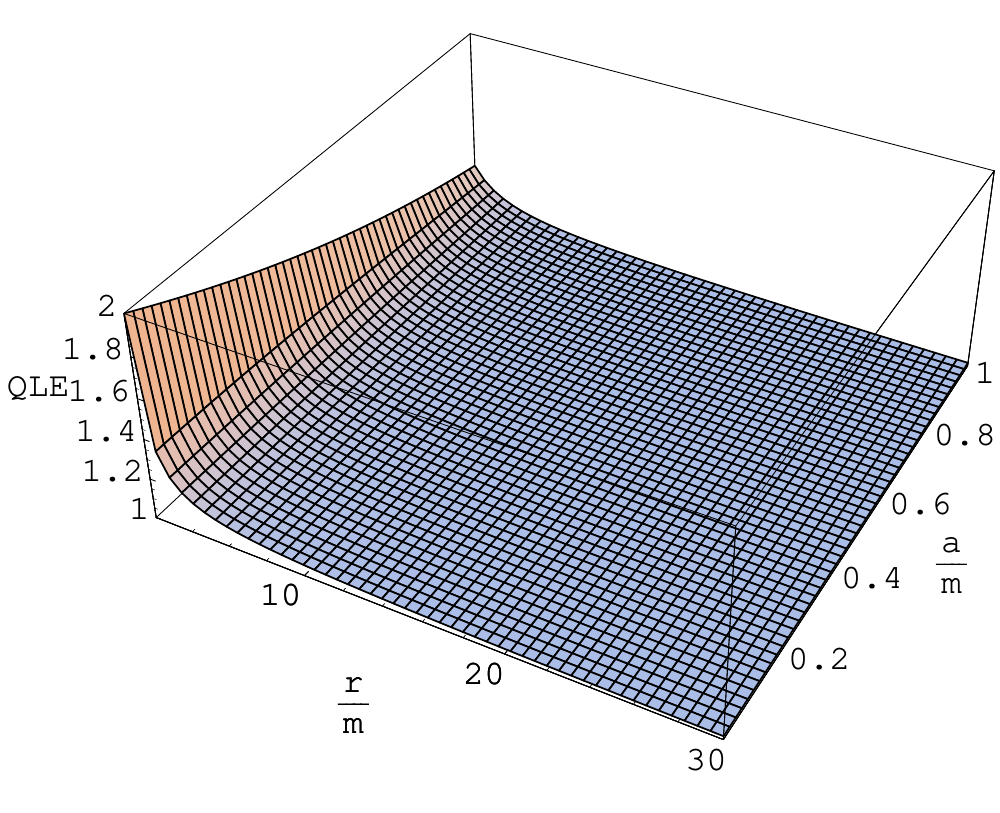}
		\caption{{\scriptsize$0\leq a\leq 1$}}\label{fig:3a}		 
	\end{subfigure}%
		\begin{subfigure}{0.5\textwidth}
		\centering
		\includegraphics[width=0.95\linewidth, height=0.27\textheight]{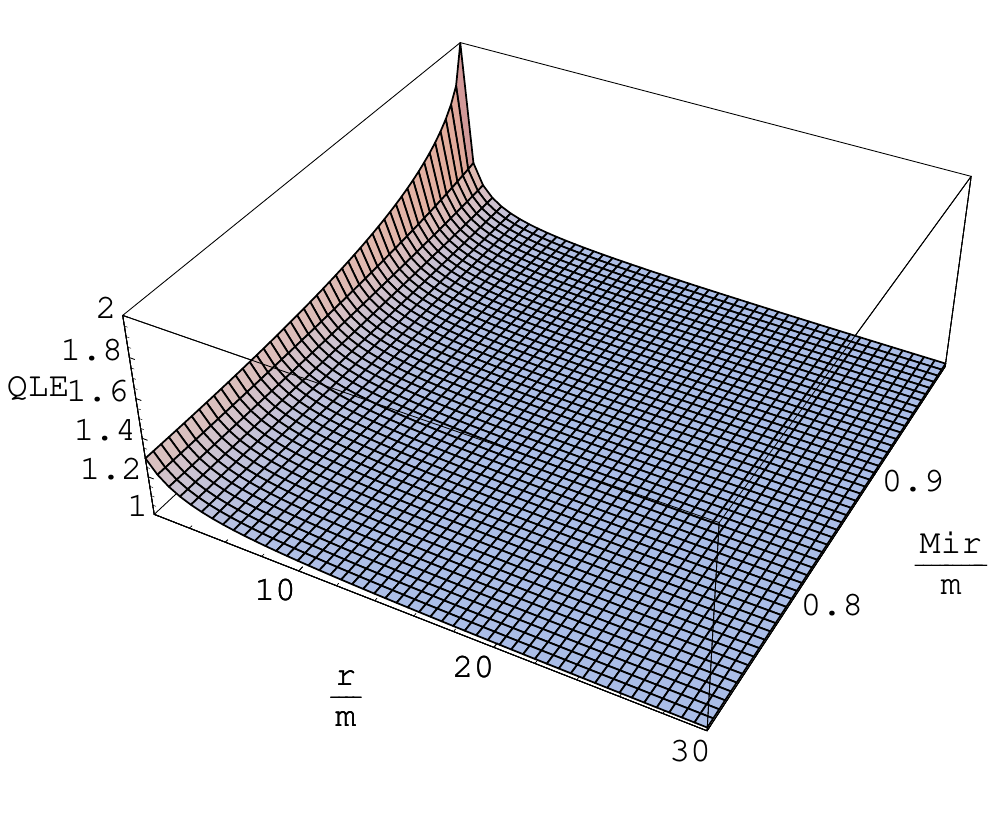}
		\caption{{\scriptsize $\sqrt{2}/2\leq M_{\mathrm{ir}}\leq 1$}}\label{fig:3b}
	\end{subfigure}
	\caption{\hspace{75pt}$r\ge2$, $m=1$.\\These figures show the numerical approximation of the QLE outside $r=2m$. (A) agrees with the result in \cite{SCLN2}; (B) is the result for the irreducible mass $M_{\mathrm{ir}}$.}\label{fig:2}
\end{figure}

\section{Geometrical construction of quasi-local energy}
Let $(M^4,g)$ and $(\bar{M}^4,\bar{g})$ be two oriented and time-oriented spacetimes which are considered as the physical spacetime and the reference spacetime, respectively. Let $S^2$ be a closed spacelike surface in $M$. A 4D isometric matching reference of $S$ is defined as a smooth embedding $\varphi:U\to \bar{M}$ preserving orientations and time-orientations on some open neighborhood $U$ of $S$ such that
\begin{equation}
\varphi^*\bar{g}=g
\end{equation}
on $S$. Let $N$ be a future directed timelike vector field on $M$. Then, the CNT quasi-local energy (see \cite{CNT2,CNT3,CNT4,NCLS,SCLN2,SCLN3,Wu}) of $S$ with respect to $N$ with a 4D isometric matching reference $\varphi$ is given by
\begin{equation}\label{eqn-CNT-E-1}
E(S,N,\varphi)=\frac{1}{8\pi}\int_S \iota^*[(\omega^a{}_b-\bar{\omega}^a{}_b)\wedge i_N\eta_{a}{}^b].
\end{equation}
Here, $\omega^a{}_b$ and $\bar{\omega}^a{}_b$ are the connection forms of the Levi-Civita connections for $g$ and $\varphi^*\bar{g}$ respectively, $\iota:S\to M$ is the natural inclusion map, and
\begin{equation}
\eta_a{}^b=\frac{1}{2}\sqrt{-\det g}g^{b\beta}\epsilon_{a\beta\mu\nu}dx^\mu\wedge dx^\nu.
\end{equation}
Although the 4D isometric matching reference $\varphi$ is by definition smooth on a neighborhood of $S$, the value of $E(S,N,\varphi)$ depends only on the 1-jet of $\varphi$ (see \cite{LY}). That is to say,
$$E(S,N,\varphi_1)=E(S,N,\varphi_2)$$
for any two references $\varphi_1$ and $\varphi_2$ with
$$\varphi_1=\varphi_2\ \mbox{and}\ d\varphi_1=d\varphi_2$$
on $S$.

Consider the axisymmetric Kerr-like spacetime $(M,g)$ in the Boyer-Lindquist coordinates \cite{SCLN2}:
\begin{equation}\label{eqn-kerr-like}
g=Fdt^2+2Gdtd\phi+Hd\phi^2+R^2dr^2+\Sigma^2d\theta^2,
\end{equation}
where the components $F,G,H,R,\Sigma$ are functions of $r$, $\theta$. For the Kerr spacetime, they are
\begin{equation}\label{eqn-kerr}
\left\{
\begin{array}{l}
F=-\frac{\Delta-a^2\sin^2\theta}{\Sigma^2}\\
G=-\frac{2mar\sin^2\theta}{\Sigma^2}\\
H\Sigma^2=\sin^2\theta((r^2+a^2)^2-\Delta a^2\sin^2\theta)\\
R^2\Delta=\Sigma^2=r^2+a^2\cos^2\theta\\
\Delta=r^2-2mr+a^2.
\end{array}\right.
\end{equation}
Here, we suppose $0\leq a\leq m$. Then the 4D isometric matching equation for an axially symmetric embedding:
\begin{equation}\label{eqn-axis-embedding}
\left\{\begin{array}{l}T=T(t,r,\theta)\\
X=\rho(t,r,\theta)\cos(\phi+\Phi(t,r,\theta))\\
Y=\rho(t,r,\theta)\sin(\phi+\Phi(t,r,\theta))\\
Z=Z(t,r,\theta)
\end{array}\right.
\end{equation}
into $\R^{1,3}$ of a constant radius surface
$$S(t_0,r_0)=\{(t,r,\theta,\phi)\ |\ t=t_0,r=r_0 \}$$
is indeed explicitly solvable (see \cite{SCLN2}). With 4D isometric matching, there are still two degrees of freedom. These can be taken as the two functions $x$ and $y$ on the 2-surface. The critical value of the QLE is corresponding to the solution $x,y$ of the energy optimization. Set
\begin{equation}\label{eqn-x-y}
\left\{\begin{array}{l}x(\theta)=T_r(t_0,r_0,\theta)\\
y(\theta)=T_\theta(t_0,r_0,\theta)
\end{array}\right.
\end{equation}
and
\begin{equation}
N=(\varphi^{-1})_*\frac{\p}{\p T}.
\end{equation}
Then,
\begin{equation}\label{eqn-CNT-x-y}
E(x,y):=E(S,N, \varphi)=\frac{1}{4}\int_0^{\pi}\mathfrak{B}(x,y)d\theta
\end{equation}
with (see Appendix of \cite{LT})
\begin{equation}\label{eqn-CNT-B}
\begin{split}
\mathfrak{B}(x,y)=&-\frac{\alpha(H\Sigma^2)_r}{2\sqrt H R^2\Sigma^2}-\sqrt H\left(\frac{H_{\theta\theta}-2l}{\beta}+\frac{R_\theta xy}{R\alpha}-\frac{xy^3\beta+H_\theta\alpha\Sigma^2}{l\alpha\beta\Sigma}\Sigma_\theta\right)\\
&+\frac{\sqrt Hyx_\theta}{\alpha}+\frac{\sqrt Hy(H_\theta\alpha-xy\beta)}{l\alpha\beta}y_\theta,
\end{split}
\end{equation}
where
\begin{equation}\label{eqn-a-b-l}
\left\{\begin{array}{l}\alpha=\sqrt{x^2\Sigma^2+R^2l}\\
\beta=\sqrt{-H_\theta^2+4Hl}\\
l=y^2+\Sigma^2.
\end{array}\right.
\end{equation}

The Euler-Lagrange equation of $E(x,y)$ is
\begin{equation}\label{eqn-critical}
\left\{
\begin{split}
y_\theta=&-\frac{(\Sigma^2H)_r}{2HR^2}x-\frac{\Sigma H_\theta-2H\Sigma_\theta}{2H\Sigma}y\\
x_\theta=&\frac{R_\theta}{R}x+\left(\frac{(\Sigma^2H)_r}{2H\Sigma^2}-\frac{\alpha\beta+xyH_\theta}{2Hl}\right)y
\end{split}\right.
\end{equation}
which has an obvious solution $x\equiv y\equiv 0$. It was shown in \cite{LT} that
\begin{equation}
E(0,0)=\mathfrak{m}_{\mathrm{BY}}(S).
\end{equation}
Here
$$\mathfrak{m}_{\mathrm{BY}}(S):=\frac{1}{8\pi}\int_S (k_0-k)dV_s$$
is the Brown-York mass of $S$ \cite{BY}, where $k_0$ and $k$ are the mean curvatures of $S$ and $\bar{S}$ respectively. Note that for the Brown-York definition, $\bar{S}$ is the isometric embedding of $S$ into $\mathbb{R}^3$. Here, if the constant $t_{0}$, $r_{0}$ surface can be embedded into the constant $T$ hyperplane of $\mathbb{R}^{1,3}$, then the critical value of the QLE is equal to the Brown-York mass, as shown in \cite{LT}.

Let $K(a,r_0,\theta)$ be the Gaussian curvature, $k(a,r_0,\theta)$ be the mean curvature of the constant radius surface with $r=r_0$, and $k_0(a,r_0,\theta)$ be the mean curvature of $\bar{S}$. 
We prove the following theorem in the next two sections:
\begin{thm}\label{thm-main}
\begin{itemize}
\hspace{50pt}
\item[(1)] When $r>r_{+}(a):=m+\sqrt{m^2-a^2}$, $K(a,r,\theta)>0$ for any $\theta$ if and only if
\begin{equation}
r>r_k(a).
\end{equation}
Here, $r_k(a)$ is the unique real root of the cubic polynomial: $r^3+a^2r-6a^2m$,
\begin{equation}\label{eqn-rk}
\begin{split}
r_k(a)=&-\frac{a^2}{\sqrt[3]3\sqrt[3]{27a^2m+\sqrt 3\sqrt{243a^4m^2+a^6}}}\\
&+\frac{\sqrt[3]{27a^2m+\sqrt 3\sqrt{243a^4m^2+a^6}}}{\sqrt[3] 9};
\end{split}
\end{equation}
\item[(2)] when $r>r_{+}(a)$, $k_0(a,r,\theta)-k(a,r,\theta)>0$ for any $\theta$ if and only if
    \begin{equation}
    r>\sqrt 3 a.
    \end{equation}
\end{itemize}
\end{thm}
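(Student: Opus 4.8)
The plan is to reduce both statements to sign conditions for explicit functions of the single variable $u=\sin^2\theta\in[0,1]$, and to show that in each case the binding constraint sits at the pole $\theta=0$ (i.e.\ $u=0$). First I would record the induced geometry: on $S(t_0,r_0)$ one has $dt=dr=0$, so the first fundamental form is $\Sigma^2d\theta^2+Hd\phi^2$, an axisymmetric surface of revolution. The revolution formula then gives
\[
K=\frac{1}{\Sigma^2}\left(-\frac{H_{\theta\theta}}{2H}+\frac{H_\theta^2}{4H^2}+\frac{H_\theta\Sigma_\theta}{2H\Sigma}\right).
\]
Substituting \eqref{eqn-kerr}, with $\Sigma^2=(r^2+a^2)-a^2u$ and $H\Sigma^2=u\big((r^2+a^2)^2-\Delta a^2u\big)$, turns $K$ into a rational function of $u$ whose denominator is manifestly positive for $r>r_+$; hence the sign of $K$ equals that of a polynomial $p(u)$, and part (1) becomes the assertion that $p>0$ on $[0,1]$ iff $r>r_k(a)$.

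For the pole value I would expand $\sqrt H$ in the meridian arc length $d\rho=\Sigma\,d\theta$ near $\theta=0$ and read the curvature from $f(\rho)=\rho-\tfrac{1}{6}K_0\rho^3+\cdots$, obtaining
\[
K(a,r,0)=\frac{r\,(r^3+a^2r-6a^2m)}{(r^2+a^2)^3},
\]
so $K(a,r,0)>0$ exactly when $r^3+a^2r-6a^2m>0$. Since this cubic has derivative $3r^2+a^2>0$, it is strictly increasing with a unique real root $r_k(a)$, whose closed form \eqref{eqn-rk} is Cardano's formula. The ``only if'' direction of part (1) is then immediate, because positivity of $K$ for all $\theta$ forces $K(a,r,0)>0$. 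The ``if'' direction reduces to showing $u=0$ minimizes $p$ on $[0,1]$. As a consistency check, at the equator $K(a,r,\pi/2)=\dfrac{(r^2+a^2)^3-\Delta a^2(2r^2+a^2)}{r^4\big((r^2+a^2)^2-\Delta a^2\big)}$, and since $\Delta<r^2+a^2$ the numerator exceeds $(r^2+a^2)\big((r^2+a^2)^2-a^2(2r^2+a^2)\big)=(r^2+a^2)r^4>0$; thus the equator never violates positivity and only the pole can.

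For part (2) I would first compute $k$ on the constant-$t$ slice, whose spatial metric is $R^2dr^2+\Sigma^2d\theta^2+Hd\phi^2$ with $R=\Sigma/\sqrt\Delta$; the surface $r=r_0$ has unit normal $\sqrt\Delta\,\Sigma^{-1}\partial_r$ and mean curvature $k=\dfrac{(\Sigma^2H)_r}{2R\,\Sigma^2H}$. For $k_0$ I would use the isometric embedding of $S$ into $\R^3$ as the profile $(\sqrt H\cos\phi,\sqrt H\sin\phi,z)$ with $z'=\sqrt{\Sigma^2-(\sqrt H)_\theta^2}$ (so that, at $y\equiv0$, the paper's $\beta=\sqrt{4H\Sigma^2-H_\theta^2}$ reappears), giving the revolution mean curvature $k_0=\dot z/\sqrt H-\ddot\rho/\dot z$ with $\rho=\sqrt H$ in arc length; this presupposes $K>0$, i.e.\ $r>r_k(a)$, for the embedding to exist. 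Evaluating at the pole, where $k_0(a,r,0)=2\sqrt{K(a,r,0)}$ and $k(a,r,0)=2r\sqrt\Delta/(r^2+a^2)^{3/2}$, the inequality $k_0-k>0$ collapses after squaring to $r^3+a^2r-6a^2m>r\Delta$, i.e.\ $2mr^2>6a^2m$, i.e.\ $r>\sqrt3\,a$, which is exactly the claimed threshold.

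The main obstacle in both parts is the monotonicity statement—that $\theta=0$ truly minimizes $K$ and $k_0-k$ over $[0,\pi]$—so that the pole computations control the sign everywhere. For $K$ this is a tractable polynomial problem: I would compute $p'(u)$ and show it is positive on $[0,1]$ for $r>r_+$, grouping terms with definite sign using $\Delta>0$ and $0\le u\le1$. For $k_0-k$ it is harder, since $k_0$ carries the square root $z'=\sqrt{\Sigma^2-(\sqrt H)_\theta^2}$ and is not rational in $u$; here I would either bound $k_0-k$ below by its pole value directly or differentiate in $\theta$ and control the sign after rationalizing. The symmetry $\theta\mapsto\pi-\theta$ restricts everything to $[0,\pi/2]$, and the equator estimate above shows the equatorial extreme is harmless, so the whole content is the monotone passage from equator to pole.
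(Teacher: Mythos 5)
Your setup and the necessity directions are sound and match the paper: reducing to $\sigma=\sin^2\theta$, the pole values $K(a,r,0)=r(r^3+a^2r-6a^2m)/(r^2+a^2)^3$ and $k_0^2(0)-k^2(0)=\frac{8r(r^2-3a^2)}{(r^2+a^2)^3}$ (after setting $m=1$), and Cardano's formula for $r_k$ all agree with the paper's computations. But the sufficiency directions --- which you correctly identify as ``the main obstacle'' --- are left as plans rather than proofs, and this is where the actual content of the theorem lives.

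For part (1), your plan (``compute $p'(u)$ and show it is positive\dots grouping terms with definite sign'') is workable but you have not executed it. The paper's device is worth knowing: writing $K=f(\sigma)/[(a^2+r^2)^2-\Delta a^2\sigma]^2$, it computes $f'=\frac{4r(a^2+r^2)^2a^2\,U(\sigma)}{((a^2+r^2)-a^2\sigma)^4}$ and then observes that $U''(\sigma)=-4\Delta a^4<0$, so $U$ is concave and its nonnegativity on $[0,1]$ follows from checking only the two endpoints $U(0)\geq 3(a^2+r^2)^2(r^2-a^2)\geq 0$ and $U(1)=3r^2((a^2+r^2)^2-\Delta a^2)>0$. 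Hence $f$ is increasing and $f(0)>0$ suffices. Note this is positivity of $f$ via monotonicity of $f$, not monotonicity of $K$ itself; your equatorial consistency check is therefore not needed.

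For part (2) the gap is more serious, because neither of your proposed routes (bounding $k_0-k$ below by its pole value, or differentiating $k_0-k$ in $\theta$ and ``rationalizing'') has an obvious execution: $k_0$ carries the square root $\beta=\sqrt{4H\Sigma^2-H_\theta^2}$ and $k_0-k$ is \emph{not} generally minimized at the pole, so the first route fails as stated and the second leads into an algebraic thicket. The missing idea is the pointwise inequality $k_0\geq 2\sqrt{K}$, which follows from writing $k_0=K\sqrt{H/L}+\sqrt{L/H}$ with $L=1-\frac{H_\theta^2}{4H\Sigma^2}$ and applying $\bigl(K\sqrt{H/L}-\sqrt{L/H}\bigr)^2\geq 0$ (valid wherever $K\geq 0$, which part (1) guarantees since $r>\sqrt3 a$ and $r>r_+(a)$ together force $r>r_k(a)$). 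You observed the equality case $k_0(0)=2\sqrt{K(0)}$ at the pole but did not promote it to the global estimate. With it, $k_0^2-k^2\geq 4K-k^2=\frac{4f-g}{[(a^2+r^2)^2-\Delta a^2\sigma]^2}$ is again a ratio of explicit algebraic expressions, and the same concavity-plus-endpoints argument (applied to $V=U-\Delta r^4$, with $V''=-4\Delta a^2<0$) shows $h=4f-g$ is increasing with $h(0)>0$. Without this reduction your part (2) is a statement of intent, not a proof.
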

Note that $r_k(a)$ is an increasing function of $a\in [0,m]$ with $r_k(0)=0$ and  $r_k(m)\fallingdotseq1.63437m$. So that (1) of Theorem \ref{thm-main} improves a result of [\cite{LT}, Theorem 3.1]. Moreover, note that the graphs of $r=r_k(a)$, $r=r_+(a)$ and $r=\sqrt3 a$ intersect at the point $(a,r)=(\sqrt{3}m/2,3m/2)$ (see Fig.\ \ref{fig:1}). The triangle-like region $\mathfrak{A}$ is defined by
$$\mathfrak A:=\{(r,a)\ |\ r_k(a)< r < \sqrt{3}a,\ a\in [\sqrt{3}m/2,m]\}.$$
As long as $(a,r)$ is in this region, the constant radius surface $S(t,r)$ is outside the outer horizon with positive Gaussian curvature but $k_0-k<0$ for some $\theta$. Therefore there is no known theorem to guarantee the positivity of the QLE. Later in Sec. 6, we show numerical results which imply positivity and the monotonically decreasing property.

By setting $t=m\tilde{t}$, $r=m\tilde{r}$, $a=m\tilde{a}$ and preserving $\theta$ and $\phi$ we have $ds^2=m^2d\tilde{s}^2$, where 
\begin{equation}
\begin{split}
d\tilde{s}^2&=-\frac{\tilde{\Delta}}{\tilde{\Sigma}^2}[d\tilde{t}-\tilde{a}\sin\theta d\phi]^2+\frac{\sin^2\theta}{\tilde{\Sigma}^2}[(\tilde{r}^2+\tilde{a}^2)d\phi-\tilde{a}d\tilde{t}]^2\\
&+\frac{\tilde{\Sigma}^2}{\tilde{\Delta}}d\tilde{r}^2+\tilde{\Sigma}^2d\theta^2,\nonumber
\end{split}
\end{equation}
$\tilde{\Delta}=\tilde{r}^{2}-2\tilde{r}+\tilde{a}^2$, $\tilde{\Sigma}^2=\tilde{r}^{2}+\tilde{a}^2\cos^2\theta$. So, without loss of generality, we will simply assume that $m=1$ in the proofs in the following sections.

\section{Gaussian curvature}
In the previous section, we introduced the QLE for the surface with constant $t,r$ in the Kerr spacetime, and the critical value of the QLE is the Brown-York mass, which involves the 2-surface isometric embedding into $\mathbb{R}^3$. It can be solved explicitly in the Kerr case for positive Gaussian curvature. From \cite{LT}, the Gaussian curvature $K$ of the constant radius surface $S(t,r)$ for the Kerr-like spacetime \eqref{eqn-kerr-like} is
\begin{equation}\label{eqn-K-1}
K=\frac{H(4\Sigma^4+H_\theta(\Sigma^2)_\theta-2H_{\theta\theta}\Sigma^2)-\Sigma^2(-H_\theta^2+4H\Sigma^2)}{4\Sigma^4H^2}.
\end{equation}
It can be simplified as
\begin{equation}\label{eqn-K-2}
\begin{split}
K=&\frac{1}{4}\frac{H(H_\theta(\Sigma^2)_\theta-2H_{\theta\theta}\Sigma^2)+\Sigma^2H_\theta^2}{\Sigma^4H^2}\\
=&\frac{H_\theta(H\Sigma^2)_\theta-2H_{\theta\theta}H\Sigma^2}{4\Sigma^4H^2}\\
=&-\frac{1}{4}\left(H_\theta\left(\frac{1}{H\Sigma^2}\right)_\theta+2H_{\theta\theta}\frac{1}{H\Sigma^2}\right)\\
=&-\frac{1}{4H_\theta}\left(H_\theta^2\left(\frac{1}{H\Sigma^2}\right)_\theta+\left(H_\theta^2\right)_\theta \frac{1}{H\Sigma^2}\right)\\
=&\frac{\left(1-\frac{H_\theta^2}{4H\Sigma^2}\right)_\theta}{H_\theta}.
\end{split}
\end{equation}
Let $\sigma=\sin^2\theta$. Then from \eqref{eqn-kerr} we have
\begin{equation}\label{eqn-HS}
H\Sigma^2=\sigma((a^2+r^2)^2-\Delta a^2\sigma), \quad H=\frac{\sigma((a^2+r^2)^2-\Delta a^2\sigma)}{(a^2+r^2)-a^2\sigma}.
\end{equation}
By direct computation, we have
\begin{equation}\label{eqn-H-1}
H_\sigma=\frac{(a^2+r^2)^3-2(a^2+r^2)\Delta a^2\sigma+\Delta a^4\sigma^2}{\left((a^2+r^2)-a^2\sigma\right)^2}
\end{equation}
and
\begin{equation}\label{eqn-H-2}
H_{\sigma\sigma}=\frac{4r(a^2+r^2)^2a^2}{\left((a^2+r^2)-a^2\sigma\right)^3}.
\end{equation}

We are now ready to prove (1) of Theorem \ref{thm-main}.
\begin{proof}[Proof of (1) in Theorem \ref{thm-main}] Note that
\begin{equation}
H_\theta=2H_\sigma \sin\theta \cos\theta,
\end{equation}
\begin{equation}
\begin{split}
H_{\theta\theta}=&4H_{\sigma\sigma}\sin^2\theta \cos^2\theta+2H_\sigma \cos^2\theta-2H_\sigma \sin^2\theta\\
=&4H_{\sigma\sigma}\sigma(1-\sigma)+2H_\sigma(1-2\sigma),
\end{split}
\end{equation}
and by \eqref{eqn-HS}
\begin{equation}
\begin{split}
(H\Sigma^2)_\theta=&2(H\Sigma^2)_\sigma\sin\theta\cos\theta\\
=&2((a^2+r^2)^2-2\Delta a^2\sigma)\sin\theta\cos\theta.
\end{split}
\end{equation}
Substituting these into the second equality of \eqref{eqn-K-2}, we have
\begin{equation}\label{eqn-K-3}
\begin{split}
K=&\frac{H_\theta(H\Sigma^2)_\theta-2H_{\theta\theta}H\Sigma^2}{4\Sigma^4H^2}\\
=&\frac{f(\sigma)}{[(a^2+r^2)^2-\Delta a^2\sigma]^2},
\end{split}
\end{equation}
where
\begin{equation}\label{eqn-f}
f(\sigma)=H_\sigma[(a^2+r^2)^2-\Delta a^2]-2H_{\sigma\sigma}[(a^2+r^2)^2-\Delta a^2\sigma](1-\sigma).
\end{equation}
Then, by \eqref{eqn-H-2},
\begin{equation}\label{eqn-f-1}
\begin{split}
f'=&H_{\sigma\sigma}[3(a^2+r^2)^2+\Delta a^2-4\Delta a^2\sigma]\\
&-2H_{\sigma\sigma\sigma}[(a^2+r^2)^2-\Delta a^2\sigma](1-\sigma)\\
=&\frac{4r(a^2+r^2)^2a^2 U(\sigma)}{\left((a^2+r^2)-a^2\sigma\right)^4},
\end{split}
\end{equation}
where
\begin{equation}\label{eqn-U}
\begin{split}
U(\sigma)=&\left((a^2+r^2)-a^2\sigma\right)[3(a^2+r^2)^2+\Delta a^2-4\Delta a^2\sigma]\\
&-6a^2[(a^2+r^2)^2-\Delta a^2\sigma](1-\sigma).
\end{split}
\end{equation}
Note that $U''=-4\Delta a^4<0$,
\begin{equation}\label{eqn-U0}
\begin{split}
U(0)=&(a^2+r^2)(3(a^2+r^2)^2+\Delta a^2)-6a^2(a^2+r^2)^2\\
\geq&3(a^2+r^2)^2(r^2-a^2)\\
\geq&0
\end{split}
\end{equation}
since $r>r_+\geq a$, and
\begin{equation}\label{eqn-U1}
U(1)=3r^2((a^2+r^2)^2-\Delta a^2)>0.
\end{equation}
So,
\begin{equation}
U(\sigma)\geq0
\end{equation}
for any $\sigma\in [0,1]$. Then, by \eqref{eqn-f-1}, $f$ is increasing in $[0,1]$. Moreover, note that
\begin{equation}
f(0)=(a^2+r^2)r(r^3+a^2r-6a^2),
\end{equation}
therefore by \eqref{eqn-K-3}, we have
\begin{equation}
\inf_\theta K(\theta,a,r)>0
\end{equation}
when $r>r_+(a)$ and
\begin{equation}
r^3+a^2r-6a^2>0.
\end{equation}
Conversely
\begin{equation}\label{eqn-K0}
K(0)=\frac{r(r^3+a^2r-6a^2)}{(a^2+r^2)^3},
\end{equation}
so that if $K>0$, we must have $r>r_k(a)$. This completes the proof of (1) in Theorem \ref{thm-main}.
\end{proof}

\section{Difference of mean curvatures}
In this section, we prove (2) of Theorem \ref{thm-main}. 
\begin{proof}
The integrand of the critical quasi-local energy is $k_{0}-k$. The mean curvature $k$  \cite{LT} of the constant radius surface $S(t,r)$ for Kerr-like spacetime \eqref{eqn-kerr-like} is
\begin{equation}\label{eqn-k}
k=\frac{(H\Sigma^2)_r}{2H\Sigma^2R},
\end{equation}
and the mean curvature $k_0$ of $S(t,r)$ isometrically embedded into $\R^3$ (if such an embedding exists) is
\begin{equation}\label{eqn-k0}
k_0=\frac{H_\theta(\Sigma^2)_\theta+4\Sigma^4-2H_{\theta\theta}\Sigma^2}{2\Sigma^3(-H_\theta^2+4H\Sigma^2)^{1/2}}.
\end{equation}
Let $L=1-\frac{H_\theta^2}{4H\Sigma^2}$, from \eqref{eqn-K-1} we obtain \begin{equation}
H_\theta(\Sigma^2)_\theta+4\Sigma^4-2H_{\theta\theta}\Sigma^2=4H\Sigma^4K+4\Sigma^4L.
\end{equation} 
Substituting this into \eqref{eqn-k0}, we have
\begin{equation}\label{eqn-k0-1}
k_0=\sqrt{\frac{H}{L}}K+\sqrt{\frac{L}{H}}.
\end{equation}
From the last equality of \eqref{eqn-K-2} for $m=1$ and $\sigma=\sin^2\theta$ we have
\begin{equation}\label{eqn-L}
L_\sigma=H_\sigma K.
\end{equation}
From \eqref{eqn-k}, when $r>r_+(a)$, by direct computation we have
\begin{equation}\label{eqn-k-2}
k=\frac{(2r(r^2+a^2)-(r-1)a^2\sigma)\sqrt \Delta}{((r^2+a^2)^2-\Delta a^2\sigma)\sqrt{r^2+a^2-a^2\sigma}}.
\end{equation}

To prove the necessary condition of Theorem \ref{thm-main} (2), it is sufficient to consider the north pole $\sigma=0$. By \eqref{eqn-L} we have
\begin{equation}
\lim_{\sigma\to 0}\frac{L(\sigma)}{H(\sigma)}=\frac{L_\sigma(0)}{H_\sigma(0)}=K(0),
\end{equation}
so that $k_{0}(0)=K(0)/\sqrt{K(0)}+\sqrt{K(0)}=2\sqrt{K(0)}$. From \eqref{eqn-k-2} the mean curvature is
\begin{equation}
k(0)=\frac{2r\sqrt\Delta}{(r^2+a^2)^\frac32}.
\end{equation}
Consequently, when $r>r_+(a)$ and $r>r_k(a)$, we have
\begin{equation}\label{eqn-k-k0-0}
k_0^2(0)-k^2(0)=4K(0)-k^2(0)=\frac{8r(r^2-3a^2)}{(r^2+a^2)^3}.
\end{equation}
Therefore $r>\sqrt 3 a$ is necessary for $k_0-k>0$.

Conversely, note that the three curves $r=r_+(a)$, $r=r_k(a)$ and $r=\sqrt 3a$ intersect at the point $(a,r)=(\sqrt 3/2,3/2)$. It is clear that
\begin{equation}
\sqrt 3a<r_k(a)< r_+(a)
\end{equation}
when $a\in (0,\sqrt3/2)$, and
\begin{equation}
r_+(a)<r_k(a)<\sqrt 3a
\end{equation}
for $a\in (\sqrt 3/2,1)$. So, by (1) of Theorem \ref{thm-main}, when $r>r_+(a)$ and $r>\sqrt 3 a$, we obtain $K>0$ for $a\in(0,1)$. From \eqref{eqn-k0-1} it is not difficult to see that $\left(K\sqrt{H/L}-\sqrt{L/H}\right)^2\geq0$ implies 
\begin{equation}
k_{0}\geq 2\sqrt{K}
\end{equation}
for $K\geq0$. Hence, by \eqref{eqn-K-3}
\begin{equation}\label{eqn-k0-k}
k_0^2-k^2\geq 4K-k^2=\frac{4f-g}{[(a^2+r^2)^2-\Delta a^2\sigma]^2}.
\end{equation}
where $f$ is given in \eqref{eqn-f} and
\begin{equation}
\begin{split}
g=&\frac{(2r(r^2+a^2)-(r-1)a^2\sigma)^2\Delta}{r^2+a^2-a^2\sigma}\\
=&\frac{((r+1)(r^2+a^2)+(r-1)(r^2+a^2-a^2\sigma))^2\Delta}{r^2+a^2-a^2\sigma}.
\end{split}
\end{equation}
Let
\begin{equation}
h=4f-g.
\end{equation}
By direct computation,
\begin{equation}
\begin{split}
g'=&\frac{\Delta a^2[(r+1)^2(r^2+a^2)^2-(r-1)^2(r^2+a^2-a^2\sigma)^2]}{(r^2+a^2-a^2\sigma)^2}\\
\leq&\frac{\Delta a^2[(r+1)^2(r^2+a^2)^2-(r-1)^2r^4]}{(r^2+a^2-a^2\sigma)^2}\\
\leq &\frac{\Delta a^2[(r+1)^2(r^2+1)^2-(r-1)^2r^4]}{(r^2+a^2-a^2\sigma)^2}\\
=&\frac{\Delta a^2(4r^5+2r^4+4r^3+3r^2+2r+1)}{(r^2+a^2-a^2\sigma)^2}\\
\leq& \frac{16\Delta a^2r^5}{(r^2+a^2-a^2\sigma)^2}\\
\leq&\frac{16\Delta a^2r^5(r^2+a^2)^2}{(r^2+a^2-a^2\sigma)^4}
\end{split}
\end{equation}
since $r>r_+(a)\geq 1\geq a$.
Then, by \eqref{eqn-f-1},
\begin{equation}\label{eqn-h-1}
\begin{split}
h'=&4f'-g'\\
\geq& \frac{16a^2(r^2+a^2)^2rU}{(r^2+a^2-a^2\sigma)^4}-\frac{16\Delta a^2r^5(r^2+a^2)^2}{(r^2+a^2-a^2\sigma)^4}\\
=&\frac{16a^2(r^2+a^2)^2r V}{(r^2+a^2-a^2\sigma)^4},
\end{split}
\end{equation}
where
\begin{equation}
V=U-\Delta r^4
\end{equation}
with $U$ given in \eqref{eqn-U}. Note that
\begin{equation}
V''(\sigma)=-4\Delta a^2<0.
\end{equation}
Moreover, by \eqref{eqn-U0}
\begin{equation}
\begin{split}
V(0)=&U(0)-\Delta r^4\\
\geq&3(r^2+a^2)^2(r^2-a^2)-\Delta r^4\\
\geq&2r^4(r^2-a^2)\\
\geq& 0
\end{split}
\end{equation}
since $\Delta=r^2-2r+a^2\leq r^2-a^2$, and by \eqref{eqn-U1}
\begin{equation}
\begin{split}
V(1)= &U(1)-\Delta r^4\\
=&r^2(3((a^2+r^2)^2-\Delta a^2)-\Delta r^2)\\
\geq&r^2(3(a^2+r^2)^2-3\Delta a^2-3\Delta r^2)\\
=&6r^3(r^2+a^2)\\
>&0.
\end{split}
\end{equation}
Hence $V(\sigma)\geq 0$ for any $\sigma\in [0,1]$. By \eqref{eqn-h-1}, $h$ is increasing on $[0,1]$. By \eqref{eqn-k-k0-0}, we know that $h(0)>0$ when $r> r_+(a)$ and $r>\sqrt 3a$. So $h>0$ on $[0,1]$ and hence $k_0-k>0$ for any $\theta$.
\end{proof}


\section{The global minimum of Kerr quasi-local energy}
In \cite{LY}, we found that the CNT quasi-local energy is closely related to
Wang-Yau's energy $E_{\rm{WY}}$ \cite{WaYa}. More precisely, let $\varphi_0:S\to \R^{1,3}$ be an isometric embedding and $\tau$ be the time component of the embedding, and suppose the mean curvature vector $H$ of $S$ in $M$ is spacelike. Then,
\begin{equation}\label{eqn-CNT-WY}
\begin{split}
E(S,N_0,\varphi)=&E_{\rm{WY}}(S,\tau)\\
:=&\frac{1}{8\pi}\int_{\bar{S}}\left(-\sqrt{1+\|\nabla\tau\|^2}\vv<\bar{H},\bar{e}_1>+\vv<\bar{\nabla}_{-\nabla\tau}\bar{e}_1,\bar{e}_0>\right)dV_{\bar{S}}\\
&-\frac{1}{8\pi}\int_{S}\left(-\sqrt{1+\|\nabla\tau\|^2}\vv<H, e_1>+\vv< \nabla_{-\nabla\tau}e_1,e_0>\right)dV_S.\\
\end{split}
\end{equation}
Here $\bar{S}=\varphi_0(S)$, $H$ and $\bar{H}$ are the mean curvature vector of $S$ and $\bar{S}$ respectively; $e_0$ is a future-directed time-like vector such that
\begin{equation}\label{eqn-e-0}
\vv<H,e_0>=-\frac{\Delta\tau}{\sqrt{1+\|\nabla\tau\|^2}};
\end{equation}
$e_1$ is orthogonal to $e_0$ and $S$, and pointing outside if $S$ encloses a domain $\Omega$; $\bar{e}_1$ is pointing outside and orthogonal to $\bar{S}$ and $\frac{\p}{\p T}$; $\bar{e}_0$ is a future-directed time-like vector which is orthogonal to $\bar{S}$ and $\bar{e}_1$;
\begin{equation}\label{eqn-N-0-1}
N_0=\sqrt{1+\|\nabla \tau\|^2}e_0-\nabla\tau;
\end{equation}
and
$\varphi$ is a 4D isometric matching extension of $\varphi_0$ \cite{LY}.
$\nabla \tau$ and $\Delta\tau$ here mean the gradient and Laplacian of $\tau$ with respect to the induced metric on $S$.

By direct computation [\cite{LY} Appendix], the first equation of \eqref{eqn-critical} corresponds to
\begin{equation}
N_0=(\varphi^{-1})_*\frac{\p}{\p T}.
\end{equation}
Let
\begin{equation}
E(y):=E(x,y)
\end{equation}
with $x$ decided by $y$ from the first equation of \eqref{eqn-critical}. Then, by the uniqueness of the isometric embedding into $\R^3$ and  the relation \eqref{eqn-CNT-WY}, we obtain
\begin{equation}\label{eqn-CNTWY}
E(y)=E_{\mathrm{WY}}(S, \tau),
\end{equation}
where $\tau$ depends only on $\theta$ and $y=\frac{d\tau}{d\theta}$.

Furthermore, by combining Theorem \ref{thm-main} and Theorem 3 of Chen-Wang-Yau \cite{CWY}, and \eqref{eqn-CNTWY} (other minimizing properties of the Wang-Yau QLE can be found in \cite{MTX,MT}), we have the following corollary:
\begin{cor}\label{cor-main}
\hspace{50pt}
\begin{enumerate}
\item When $r>r_h(a)$,
$$E_{\mathrm WY}(\tau)\geq E_{\mathrm WY}(0),$$
 where $\tau$ is only a function of $\theta$ such that $Hd\phi^2+(\Sigma^2+\tau_\theta^2)d\theta^2$ has positive Gaussian curvature;
\item when $r>r_h(a)$,
$$E(y)\geq E(0),$$
where $y$ is a function of $\theta$ such that $Hd\phi^2+(\Sigma^2+y^2)d\theta^2$ has positive Gaussian curvature.
\end{enumerate}
Here
\begin{equation}\label{eqn-rh}
r_h(a)=\left\{\begin{array}{ll}r_+(a)& a\leq \sqrt{3}m/2\\
\sqrt 3 a&a>\sqrt{3}m/2.
\end{array}\right.
\end{equation}
\end{cor}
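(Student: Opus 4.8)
The plan is to read the corollary as a direct application of Chen--Wang--Yau's global minimization theorem (\cite{CWY}, Theorem 3) to the constant radius surface $S(t,r)$, with Theorem \ref{thm-main} supplying exactly the two curvature hypotheses that theorem requires, and with the identification \eqref{eqn-CNTWY} transporting the conclusion from $E_{\mathrm{WY}}$ back to the CNT energy $E(y)$. The first step is to recall the hypotheses of \cite{CWY}, Theorem 3 in the present notation: the physical surface $S$ should have a spacelike mean curvature vector $\bar H$, its induced metric should isometrically embed into $\R^3$ (equivalently $K>0$), and the reference mean curvature should dominate the physical one pointwise, $k_0>|\bar H|$. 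For $S(t,r)$ in Kerr these are precisely the two assertions of Theorem \ref{thm-main}: part (1) gives $K>0\iff r>r_k(a)$, and part (2) gives $k_0-k>0\iff r>\sqrt3 a$.

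The second step is to pin down the region where both hold. I need $r$ simultaneously above the horizon, above $r_k(a)$, and above $\sqrt3 a$, i.e.\ $r>\max\{r_+(a),r_k(a),\sqrt3 a\}$. Using the orderings already established in the proof of Theorem \ref{thm-main}(2) --- namely $\sqrt3 a<r_k(a)<r_+(a)$ for $a\in(0,\sqrt3/2)$ and $r_+(a)<r_k(a)<\sqrt3 a$ for $a\in(\sqrt3/2,1)$, together with the triple intersection at $(a,r)=(\sqrt3/2,3/2)$ --- this maximum equals $r_+(a)$ when $a\le\sqrt3/2$ and equals $\sqrt3 a$ when $a>\sqrt3/2$. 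That is exactly the piecewise $r_h(a)$ of \eqref{eqn-rh}, so the single hypothesis $r>r_h(a)$ encodes all three inequalities at once and guarantees both $K>0$ and $k_0>k$.

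The third step, and the one I expect to be the only delicate point, is to verify the structural hypothesis of \cite{CWY} that is not a curvature inequality: that the mean curvature vector of $S(t,r)$ in $M$ is spacelike with $|\bar H|=k$, the scalar in \eqref{eqn-k}. This needs care because for $r>r_+(a)$ inside the ergosphere $\partial_t$ is itself spacelike. The point is that $S(t,r)$ still lies in a $t=\mathrm{const}$ slice and is invariant under the discrete isometry $t\mapsto 2t_0-t,\ \phi\mapsto-\phi$ of Kerr; this isometry fixes the radial normal but reverses the transverse normal, so $\bar H$ can have no component there and is the purely radial spacelike vector of length $k$. This identification $|\bar H|=k$ is what underlies the Brown--York computation $E(0,0)=\mathfrak m_{\mathrm{BY}}(S)$ in \cite{LT,SCLN2}. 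With $K>0$ and $k_0>k=|\bar H|$ in force for $r>r_h(a)$, Theorem 3 of \cite{CWY} yields that the critical time function $\tau\equiv\mathrm{const}$ (i.e.\ $\tau_\theta\equiv0$) globally minimizes $E_{\mathrm{WY}}$ over admissible embeddings --- exactly those $\tau=\tau(\theta)$ for which $Hd\phi^2+(\Sigma^2+\tau_\theta^2)d\theta^2$ has positive Gaussian curvature, so the reference $\R^3$ embedding exists. This is part (1).

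For part (2) I would transport the inequality through \eqref{eqn-CNTWY}. Setting $y=\tfrac{d\tau}{d\theta}$, the admissibility metric $Hd\phi^2+(\Sigma^2+\tau_\theta^2)d\theta^2$ coincides with $Hd\phi^2+(\Sigma^2+y^2)d\theta^2$, so admissible $\tau$ and admissible $y$ correspond bijectively; since $E(y)=E_{\mathrm{WY}}(S,\tau)$ and, at the critical solution of the first equation of \eqref{eqn-critical}, $y=0$ forces $x=0$ so that $E(0)=E(0,0)=\mathfrak m_{\mathrm{BY}}(S)=E_{\mathrm{WY}}(0)$, the minimization $E_{\mathrm{WY}}(\tau)\ge E_{\mathrm{WY}}(0)$ of part (1) reads $E(y)\ge E(0)$. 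The only thing left to check is that reducing $E(x,y)$ to $E(y)$ (with $x$ eliminated via the first Euler--Lagrange equation) does not enlarge or shrink the competitor class relative to the Wang--Yau side, but this is precisely the content of \eqref{eqn-CNTWY} and the matching $N_0=(\varphi^{-1})_*\partial_T$.
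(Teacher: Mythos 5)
Your overall route is the paper's: Theorem \ref{thm-main} supplies the two curvature hypotheses of Theorem 3 in \cite{CWY}, the identification $r_h(a)=\max\{r_+(a),r_k(a),\sqrt3 a\}$ follows correctly from the orderings recorded in the proof of Theorem \ref{thm-main}(2), and the transfer from $E_{\mathrm{WY}}$ to $E(y)$ goes through \eqref{eqn-CNTWY} exactly as the paper intends. Your reflection argument ($t\mapsto 2t_0-t$, $\phi\mapsto-\phi$) for the spacelike, purely radial mean curvature vector with $|\bar H|=k$ is a sensible addition that the paper leaves implicit.

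There is, however, one hypothesis of Theorem 3 of \cite{CWY} that you never verify and that does not follow from anything you wrote: that $\tau=0$ is a critical point of $E_{\mathrm{WY}}(S,\cdot)$ over \emph{all} admissible $\tau$, not merely over the axisymmetric family $\tau=\tau(\theta)$. The fact that $x\equiv y\equiv 0$ solves the Euler--Lagrange system \eqref{eqn-critical} only gives criticality within the symmetry-reduced class, and your isometry argument controls the mean curvature vector but not the connection one-form $\alpha_H=\langle\nabla^M e_1,e_0\rangle$ of the normal bundle, which in Kerr is genuinely nonzero (it has a $\partial_\phi$ component coming from the $dt\,d\phi$ cross term). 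By \cite{MTX,WaYa}, criticality of $\tau=0$ is equivalent to $\mathrm{div}_S W=0$ for $W$ dual to $\alpha_H$, and the paper devotes the Remark following the corollary to exactly this computation: in the ZAMO frame one finds $W=-\omega^0{}_{12}e_2-\omega^0{}_{13}e_3$ with $\omega^0{}_{12}=0$, while the $\partial_\phi(\sqrt{\sigma}W^\phi)$ term vanishes by axisymmetry, whence $\mathrm{div}_S W=0$. Without this step the appeal to \cite{CWY} is not justified; you should either reproduce that computation or give a symmetry argument that actually addresses $\alpha_H$ rather than $\bar H$.
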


This implies that the critical value of the Kerr QLE is the global minimum with respect to $y$. An interesting question is whether Corollary \ref{cor-main} is satisfied in the triangle-like region $\mathfrak A$. 

\subsection*{Remark}
In order to use Chen-Wang-Yau's result \cite{CWY}, one should check that $\tau=0$ is a critical point of $E_{\mathrm{WY}}$. According to \cite{MTX,WaYa}, $\tau=0$ is a critical point of $E_{\mathrm{WY}}$ if and only if $\mathrm{div}_{S}W=0$, where $W$ is the vector field dual to $\alpha_{H}$, which is the connection one form defined by $\alpha_{H}=\langle\nabla^{M} e_{1},e_{0}\rangle_{g}$. Here $e_{1}$ is the spacelike unit vector normal to $S$ and $e_{0}$ is the future directed timelike vector normal to the hypersurface $\Omega$. By definition $\alpha_{H}(\nabla\tau)=\langle W,\nabla\tau\rangle_{g}$, we have
\begin{equation}
\alpha_{H}(\nabla\tau)=\langle\nabla^{M} e_{1},e_{0}\rangle_{g}(\nabla\tau)=-\omega^{0}{}_{1}(\nabla\tau)
\end{equation}
which implies $W=-\omega^{0}{}_{12}e_{2}-\omega^{0}{}_{13}e_{3}$, where $\omega^{a}{}_{bc}$ is the connection coefficient corresponding to the orthonormal frame $e_{0}=\beta(\partial_{t}+\omega\partial_{\phi})$, $e_{1}=(1/R)\partial_{r}$, $e_{2}=(1/\Sigma)\partial_{\theta}$, $e_{3}=(1/\sqrt{H})\partial_{\phi}$. Here $e_{0}$ is the locally nonrotating observer (see the next section) with $\beta=\sqrt{H}/\sqrt{G^2-FH}$ and angular velocity $\omega=-G/H$. 

The divergence of $W$ is
\begin{equation}
\mathrm{div}_{S}W=\frac{1}{\sqrt{\sigma}}[\partial_{\theta}(\sqrt{\sigma}W^{\theta})+\partial_{\phi}(\sqrt{\sigma}W^{\phi})],
\end{equation}
where $\sqrt{\sigma}=\Sigma\sqrt{H}$ is the determinant of the metric induced on $S$ and $W^{\theta}=-\omega^{0}{}_{12}/\Sigma$, $W^{\phi}=-\omega^{0}{}_{13}/\sqrt{H}$. Note that the metric components of the Kerr spacetime in Boyer-Lindquist coordinates depend only on $r,\theta$ so that the connection coefficients are independent of $\phi$. This implies that $\partial_{\phi}(\sqrt{\sigma}W^\phi)$ vanishes. It is not difficult to find that $\omega^{0}{}_{12}=0$ and consequently $\mathrm{div}_{S}W=0$, which implies that $\tau=0$ is a critical point of $E_{\mathrm{WY}}$.

\section{Numerical results}
By 4D isometric matching, the displacement vector $N$ is identical to the timelike Killing vector field of the reference spacetime: $\varphi_{*}N=\partial_{T}$, and $g(N,N)=\bar{g}(\partial_{T},\partial_{T})=-1$ on the 2-surface $S$. In the physical spacetime, although the timelike vector $\partial_{t}$ becomes spacelike inside the ergosphere, $N$ is still timelike, with components [\cite{SCLN2}, Eq.(47)]
\begin{equation}
N^t=\frac{\sqrt{H}\alpha}{\sqrt{-g}},\quad N^r=-\frac{x}{R^2},\quad N^\theta=-\frac{y}{\Sigma^2},\quad N^\phi=\frac{-G\alpha}{\sqrt{-g}\sqrt{H}}.
\end{equation}
It is the \emph{locally nonrotating observer} [\cite{MTW}, p.896], or the so-called ZAMO (zero angular momentum observer) at the critical point $(x,y)=(0,0)$:
\begin{equation}
N^t=\frac{\sqrt{H}}{\sqrt{G^2-FH}},\quad N^r=N^\theta=0,\quad N^{\phi}=\frac{-G}{\sqrt{G^2-FH}\sqrt{H}}.
\end{equation}
Note that this timelike $N$ has the form $\beta\partial_{\tau}$ \cite{Frolov}, where $\partial_{\tau}=\partial_{t}+\omega\partial_{\phi}$ with the angular velocity $\omega=-G/H$ and $\partial_{\tau}$ is timelike in the regions outside the outer horizon and inside the inner horizon.

From the result of \cite{SCLN2}, it follows that for constant $a$, the QLE is monotonically decreasing from $r=2m$ to large $r$. Here we analyze the QLE for the region $r\leq 2m$, which has some part of the surface inside the ergosphere. We also recheck the results for $r\geq2m$. 

These results (see Figs.\ 2--4) imply that: (i) for increasing $a$, the QLE is \emph{decreasing} and (ii) it is monotonically decreasing with respect to $r$. Concerning (i), if we consider the irreducible mass $M_{\mathrm{ir}}$ [\cite{MTW}, p.890]
\begin{equation}
a=2M_{\mathrm{ir}}\sqrt{1-\frac{M_{\mathrm{ir}}^2}{m^2}},
\end{equation}
then the QLE is increasing with respect to $M_{\mathrm{ir}}$, where $M_{\mathrm{ir}}\leq m$ and the equality holds for the Schwarzschild limit ($a=0$) and $M_{\mathrm{ir}}=m/\sqrt{2}$ for $a=m$. It is not difficult to see that $M_{\mathrm{ir}}$ is decreasing with respect to $a$. So that $0\leq a\leq m$ respectively corresponds to
$m\geq M_{\mathrm{ir}}\geq \sqrt{2}m/2$. 

For the region $r\leq 2m$, it should be considered into two parts:
\begin{equation}
\left\{\begin{split}
r>r_{+}:&\quad 0\leq a\leq \sqrt{3}m/2,\quad m\geq M_{\mathrm{ir}}\geq \sqrt{3}m/2, \\ 
r\geq r_{k}:&\quad \sqrt{3}m/2\leq a\leq m,\quad \sqrt{3}m/2\geq M_{\mathrm{ir}}\geq\sqrt{2}m/2.
\end{split}\right.
\end{equation}
Note that for $a=0$, the Schwarzschild case, the irreducible mass $M_{\mathrm{ir}}=m$ and its quasi-local energy at the horizon $r=2m$ is $E=2m$.

The Figs.\ 2--4 are the plots by the approximation of the boundary integration \eqref{eqn-CNT-x-y} at $(x,y)=(0,0)$: $E\approx\frac{1}{4}\sum\mathfrak{B}_{n}\Delta\theta$, where we pick $\Delta\theta=0.001$, and $\mathfrak{B}_{n}=\mathfrak{B}|_{\theta=n\Delta\theta}$ starting from $n=0$ to the last step which is $\theta=\pi$. Each step has the interval $\Delta\theta=0.001$. Note that $\mathfrak{B}(\theta\rightarrow0,\pi)=0$.

\begin{figure}[!htb]
	\centering
	\begin{subfigure}{0.5\textwidth}
		\centering
		\includegraphics[width=0.95\linewidth, height=0.27\textheight]{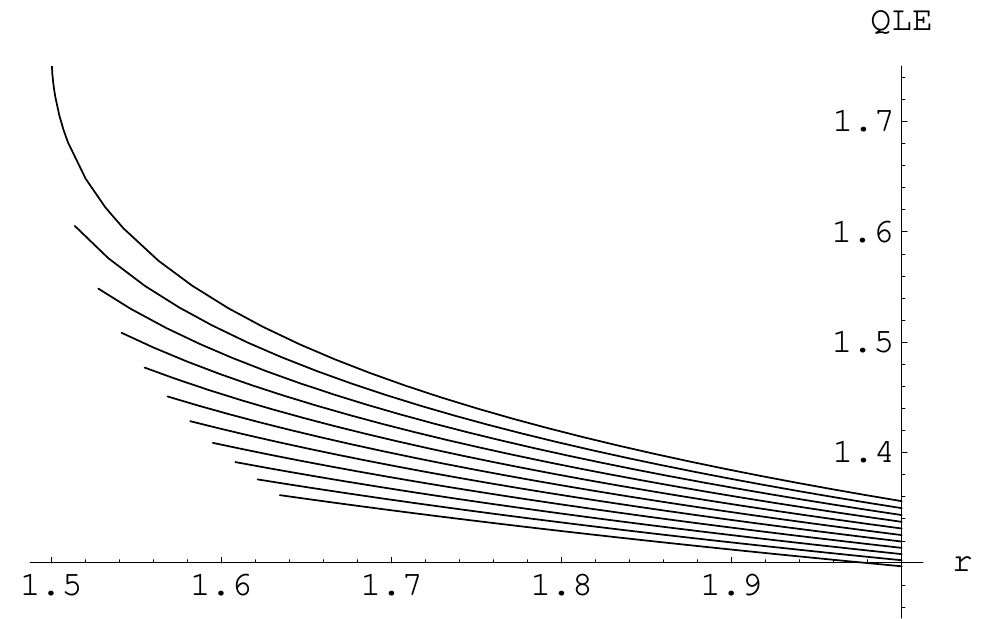}
		\caption{\scriptsize $\sqrt{3}/2\leq a\leq1$.\\$a=\sqrt{3}/2$ is the top curve;\\$a=1$ is the bottom curve.}\label{fig:1a}		 
	\end{subfigure}%
		\begin{subfigure}{0.5\textwidth}
		\centering
		\includegraphics[width=0.95\linewidth, height=0.27\textheight]{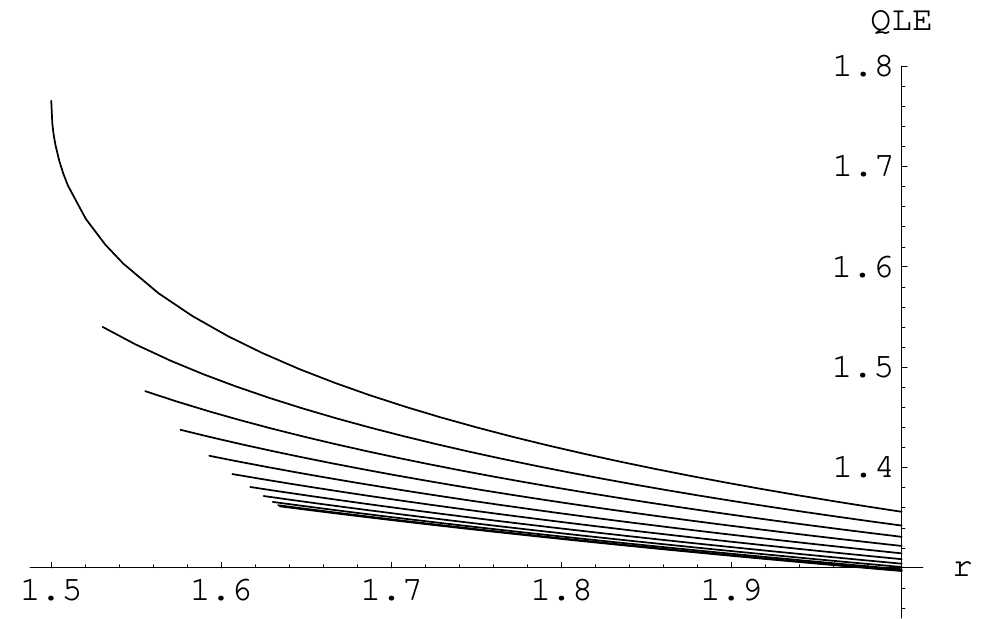}
		\caption{\scriptsize $\sqrt{2}/2\leq M_{\mathrm{ir}}\leq\sqrt{3}/2$.\\ $M_{\mathrm{ir}}=\sqrt{3}/2$ is the top curve;\\$M_{\mathrm{ir}}=\sqrt{2}/2$ is the bottom curve.}\label{fig:1b}
	\end{subfigure}
	\caption{\hspace{55pt}$r_{k}\leq r\leq 2$, $m=1$.\\The curves are for constant $a$ and constant $M_{\mathrm{ir}}$ from the bottom curve to the top with the interval $0.1\Delta a$ and $0.1\Delta M_{\mathrm{ir}}$, where $\Delta a=1-\sqrt{3}/2$ and $\Delta M_{\mathrm{ir}}=\sqrt{3}/2-\sqrt{2}/2$. They show that (i) the QLE is decreasing with respect to $a$ and increasing with respect to $M_{\mathrm{ir}}$. (ii) the QLE is monotonically decreasing with respect to $r$.}\label{fig:3}
\end{figure}

\begin{figure}[!htb]
	\centering
	\begin{subfigure}{0.5\textwidth}
		\centering
		\includegraphics[width=0.95\linewidth, height=0.27\textheight]{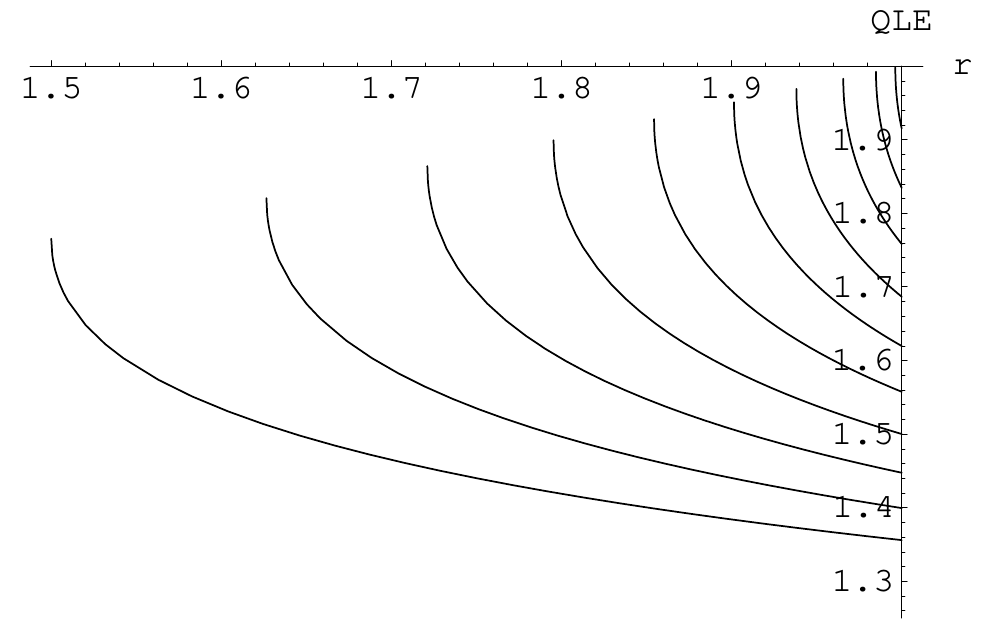}
		\caption{\scriptsize$0\leq a\leq\sqrt{3}/2$.\\$a=\sqrt{3}/2$ is the bottom curve;\\$a=0$ is the point $\mathrm{QLE}=2m$.}\label{fig:2a}		 
	\end{subfigure}%
		\begin{subfigure}{0.5\textwidth}
		\centering
		\includegraphics[width=0.95\linewidth, height=0.27\textheight]{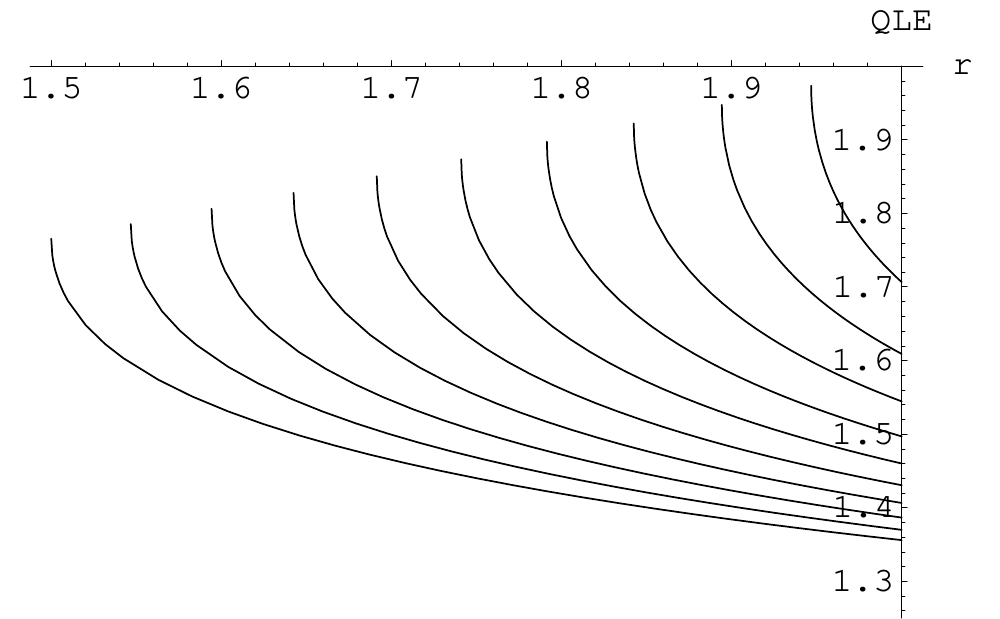}
		\caption{\scriptsize $\sqrt{3}/2\leq M_{\mathrm{ir}}\leq 1$.\\$M_{\mathrm{ir}}=\sqrt{3}/2$ is the bottom curve;\\$M_{\mathrm{ir}}=1$ is the point $\mathrm{QLE}=2m$.}\label{fig:2b}
	\end{subfigure}
	\caption{\hspace{55pt}$r_{+}< r\leq2$, $m=1$.\\The curves are for constant $a$ and the irreducible mass $M_{\mathrm{ir}}$ from the bottom curve to the top with the interval $0.1\Delta a$ and $0.1\Delta M_{\mathrm{ir}}$, where $\Delta a=\sqrt{3}/2$ and $\Delta M_{\mathrm{ir}}=1-\sqrt{3}/2$. They show that (i) the QLE is decreasing with respect to $a$ and increasing with respect to $M_{\mathrm{ir}}$. (ii) the QLE is monotonic decreasing with respect to $r$. Note that at $a=0$ (corresponding to $M_{\mathrm{ir}}=m$, the spherical symmetric case), the outer horizon $r_{+}=2m$ is only one point, which is $\mathrm{QLE}=2m$.}\label{fig:4}
\end{figure}

\newpage

\section{Conclusion and discussion}
We analyze the critical value of the Kerr QLE (the Brown-York mass) under the choice for the surface with constant $t,r$ in the Boyer-Lindquist coordinates. It is known that the outer horizon of the Kerr black hole cannot be embedded isometrically into $\mathbb{R}^{3}$ if $a$ is too large. We found that $S(t,r_{k})$ is the limit of such an embedding: if $r<r_{k}(a)$ [see \eqref{eqn-rk}] then the Gauss curvature $K$ is negative at the poles, whereas $K>0$ implies the existence of an isometric embedding. We consider only the region outside the outer horizon, i.e.\ $r\geq r_{+}$, in which the embedding is guaranteed for $a<\sqrt{3}m/2$. If $a>\sqrt{3}m/2$, the isometric embedding exists only for $r>r_{k}(a)$. 

Regarding the positivity of the critical value for the Kerr QLE, it is obvious that if the integrand $k_{0}-k>0$ then the positivity is satisfied. This is guaranteed for small rotation, but not for large $a$. In fact, $k_{0}-k>0$ is valid on the \emph{whole} surface only in the region $r>\sqrt{3}a$. It is interesting that the three curves, $r_{+}(a)$, $r_{k}(a)$ and $r=\sqrt{3}a$ intersect at the point $a=\sqrt{3}m/2$. This implies that $k_{0}-k>0$ in the region $r>r_{h}(a)$ [see \eqref{eqn-rh}]. For non-slow rotation, in the region $r_{k}(a)<r<\sqrt{3}a$, the integrand $k_{0}-k$ is not positive everywhere on the surface, but the numerical results imply that after integration over the surface, the QLE is positive in the triangle-like region. Furthermore, it is monotonically decreasing with respect to $r$.

To answer any concern about the decreasing QLE corresponding to an increasing angular momentum $a$, one may be inspired from \emph{the second law} of black hole dynamics, which implies that the black hole area \emph{can never decrease}. The area of a Kerr black hole is $A=4\pi(r^{2}_{+}+a^2)=8\pi m r_{+}$ [\cite{MTW}, Box 33.4]. One may imagine that there are two black holes with the same mass $m$ but different angular momenta, say $a_{1}$ and $a_{2}$, it is not difficult to see that $a_{1}<a_{2}$ implies $A_{1}>A_{2}$. It is reasonable that a larger black hole would carry larger quasi-local energy. And also the  black hole area is proportional to the square of the irreducible mass. So, if we replace $a$ by the irreducible mass $M_{\mathrm{ir}}$, the QLE becomes increasing with respect to $M_{\mathrm{ir}}$.

By a results of Chen-Wang-Yau, the critical value of the Kerr QLE is a global minimum (with respect to the embedding) for the region $r>r_{h}(a)$, and an interesting question is whether this is also true in the triangle-like region for non-slow rotation. 

In the region $r<r_{k}(a)$, the isometric embedding into $\mathbb{R}^3$ does not exist, which means one could try to find a non-constant solution for $\tau$. This is another interesting problem.

\section*{Acknowledgement}
We would like to thank Professor James M. Nester and Professor Naqing Xie for helpful suggestions and discussions. We would like to thank Dr. Gang Sun for sharing his codes for the numerical integration program.

C. Yu is partially supported by the Yangfan project of Guangdong Province and NSFC 11571215. J. L. Liu is supported by the China Postdoctoral Science Foundation 2016M602497 and partially supported by NSFC 61601275.


\end{document}